\documentclass[9pt,a4paper,draft]{article}
\usepackage{bbm}
\usepackage{pifont}
\usepackage{bbding}
\usepackage{amsmath}
\usepackage{mathrsfs}
\usepackage{amsfonts}
\usepackage{amssymb}
\usepackage{amsfonts,amssymb,amsmath,indentfirst,amsthm}
\usepackage{epsfig}
\usepackage[numbers,sort&compress]{natbib}
\usepackage{mathtools}
\def\ess~inf{\mathop{\rm ess~inf}}

\pagenumbering{arabic} \setlength{\topmargin}{0.0cm}
\setlength{\headsep}{0.0cm} \setlength{\topskip}{0.0cm}
\setlength{\oddsidemargin}{0.0cm}
\setlength{\textwidth}{16cm} \setlength{\textheight}{24cm}
\setlength{\jot}{0.2cm}

\numberwithin{equation}{section}

\newenvironment{key words}{\emph{\texttt{Keywords}}\mbox{  }}{ }
\newtheorem{theorem}{Theorem}[section]
\newtheorem{lemma}[theorem]{Lemma}

\renewenvironment{proof}{\noindent{\textbf{Proof.}}}{\hfill$\Box$}
\theoremstyle{remark}

\theoremstyle{plain}

\makeatletter

\newcommand{\Rmnum}[1]{\expandafter\@slowromancap\romannumeral #1@}
\makeatother

\begin{document}
\setlength{\baselineskip}{1.2\baselineskip}
\title
{\textbf{A study on a class of generalized Schr\"{o}dinger operators} \thanks{This work is supported by Natural Science Foundation of China (No.11601427);
China Postdoctoral Science Foundation (No.2017M613193);  Natural Science Basic Research Plan in Shaanxi Province of China (No.2017JQ1009).}}

\author{{Wenjuan Li \qquad \qquad Huiju Wang }  \\
(\small{School of Mathematics and Statistics, Northwestern Polytechnical University,
 Xi'an, 710129, China})\\    }
\date{}
 \maketitle
{\bf Abstract:}\  In this paper, we consider the pointwise convergence for a class of generalized Schr\"{o}dinger operators with suitable perturbations, and convergence rate for a class of generalized Schr\"{o}dinger operators with polynomial growth. We show that the pointwise convergence results remain valid for a class of generalized Schr\"{o}dinger operators under small perturbations. As applications, we obtain the sharp convergence result for Boussinesq operator and Beam operator in $\mathbb{R}^2$. Moreover, the convergence result for a class of non-elliptic Schr\"{o}dinger operators with finite-type perturbations is built. Furthermore, we proved that the convergence rate for a class of generalized Schr\"{o}dinger operators with polynomial growth depends only on the growth condition of their phase functions. This result can be applied to all previously mentioned operators, and more operators.

{\bf Keywords:} Schr\"{o}dinger operator; Convergence; Polynomial growth; Pertubation.

{\bf Mathematics Subject Classification}: 42B20, 42B25, 35S10.

\section{\textbf{Introduction}\label{Section 1}}
Consider the generalized Schr\"{o}dinger equation
\begin{equation}\label{Eq1}
\begin{cases}
 \partial_{t}u(x,t)-iP(D)u(x,t) =0 \:\:\:\ x \in \mathbb{R}^{n}, t \in \mathbb{R}^{+},\\
u(x,0)=f \\
\end{cases}
\end{equation}
where $D=\frac{1}{i}(\frac{\partial}{\partial x_{1}},\frac{\partial}{\partial x_{2}},...,\frac{\partial}{\partial x_{n}})$, $P(\xi)$ is a real continuous function defined on $\mathbb{R}^{n}$, $P(D)$ is defined via its real symbol
\[P(D)f(x)= \int_{\mathbb{R}^{n}}{e^{ix \cdot \xi}P(\xi)\hat{f}(\xi)d\xi}.\]
 The solution of (\ref{Eq1}) can be formally written as
\begin{equation}
e^{itP(D)}f(x):= \int_{\mathbb{R}^{n}}{e^{ix \cdot \xi +itP(\xi)} \hat{f} (\xi)d\xi },
\end{equation}
where $\hat{f}(\xi)$ denotes the Fourier transform of $f$.

The convergence problem, that is. to determine the optimal $s$ for which
\begin{equation}
\mathop{lim}_{t \rightarrow 0^{+}} e^{itP(D)}f(x) = f(x)
\end{equation}
almost everywhere whenever $f \in H^{s}(\mathbb{R}^{n})$, has been widely studied since the first work by Carleson (\cite{C}), see \cite{L}, \cite{V}, \cite{SS}, \cite{S}, \cite{Miao} and references therein. Sharp results were derived in some cases, such as  the elliptic case (\cite{DGL,DZ}, when $n \ge 1$, $P(\xi)=|\xi|^{2}$); the non-elliptic case (\cite{KPV}, when $n \ge 1$, $P(\xi)=\xi_{1}^{2}-\xi_{2}^{2} \pm \cdot \cdot \cdot \pm \xi_{n}^{2}$) and the fractional case (\cite{CK}, when $n \ge 1$ and $P(\xi)= |\xi|^{\alpha}$, $\alpha >1$).

In this paper, we firstly consider the convergence problem for a class of generalized Schr\"{o}dinger operators with small perturbations. We first establish the following  general results.

\begin{theorem}\label{theorem2.1}
If there exist a real continuous function $Q(\xi)$ and a real number $s_{0} >0$ such that
\begin{equation}\label{Eq2.1.1}
|P(\xi )-Q(\xi)| \lesssim 1, |\xi| \rightarrow +\infty,
\end{equation}
and for any $s > s_{0}$,
\begin{equation}\label{Eq2.1.2}
\biggl\|\mathop{sup}_{0<t<1} |e^{itQ(D)}f|\biggl\|_{L^{p}(B(0,1))} \lesssim \big\|f\big\|_{H^{s}(\mathbb{R}^{n})},  \:\ p \ge 1,
\end{equation}
then for all $s > s_{0}$ and $f \in H^{s}(\mathbb{R}^{n})$,
\begin{equation}\label{Eq2.1.3}
\biggl\|\mathop{sup}_{0<t<1} |e^{itP(D)}f|\biggl\|_{L^{p}(B(0,1))} \lesssim \big\|f\big\|_{H^{s}(\mathbb{R}^{n})},  \:\ p \ge 1.
\end{equation}
\end{theorem}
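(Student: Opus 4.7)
The plan is to exploit the small-perturbation hypothesis via the Fourier-multiplier factorization $e^{itP(D)} = e^{itQ(D)} e^{itR(D)}$, where $R := P - Q$, and to control the extra factor $e^{itR(D)}$ on the high-frequency part of $f$ by a Taylor expansion so that each term can be fed directly into the assumed maximal estimate \eqref{Eq2.1.2} for $Q$.

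Concretely, I would first fix $N$ large enough that $|R(\xi)| \le C$ on $\{|\xi| > N\}$ (possible by \eqref{Eq2.1.1}) and split $f = f_L + f_H$ with a smooth frequency cutoff at this radius. The low-frequency piece is handled pointwise: since $\hat f_L$ is compactly supported and $|e^{itP(\xi)}|=1$,
\[
|e^{itP(D)}f_L(x)| \le \|\hat f_L\|_{L^1} \lesssim \|f\|_{L^2} \le \|f\|_{H^s},
\]
which already controls $\bigl\| \sup_{0<t<1} |e^{itP(D)}f_L| \bigr\|_{L^p(B(0,1))}$ by the embedding $L^\infty(B(0,1)) \hookrightarrow L^p(B(0,1))$.

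For $f_H$, on the support of $\hat f_H$ I have $|tR(\xi)| \le C$ for $t \in (0,1)$, so I would expand
\[
e^{itR(\xi)} = \sum_{k=0}^{\infty} \frac{(itR(\xi))^k}{k!}
\]
with absolute convergence uniform in $(t,\xi)$, multiply by $e^{itQ(\xi)}\hat f_H(\xi)$, and invert the Fourier transform term by term to obtain
\[
e^{itP(D)}f_H(x) = \sum_{k=0}^{\infty} \frac{(it)^k}{k!}\, e^{itQ(D)}\bigl[R(D)^k f_H\bigr](x),
\]
with convergence in $H^s$ uniform in $t$ because $\|R(D)^k f_H\|_{H^s} \le C^k \|f_H\|_{H^s}$. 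Taking $\sup_{0<t<1}$, using $|t|^k \le 1$, pushing the supremum inside the sum via the triangle inequality, and applying \eqref{Eq2.1.2} term by term to each $R(D)^k f_H \in H^s$ yields
\[
\Bigl\| \sup_{0<t<1} |e^{itP(D)}f_H| \Bigr\|_{L^p(B(0,1))} \le \sum_{k=0}^{\infty} \frac{1}{k!}\|R(D)^k f_H\|_{H^s} \le e^{C}\|f\|_{H^s},
\]
which together with the low-frequency bound gives \eqref{Eq2.1.3}.

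The step I expect to be the main obstacle is the rigorous interchange of $\sup_{0<t<1}$ with the infinite series, since the supremum does not a priori commute with summation. I would handle this by first running the argument on the partial sums (where only the triangle inequality is needed), obtaining an estimate uniform in the truncation level, and then passing to the limit via Fatou's lemma using the uniform-in-$t$ $H^s$-convergence of the partial sums to $e^{itP(D)}f_H$; a density reduction to Schwartz $f$ would be an alternative route, since for smooth rapidly decaying data the series converges pointwise and every manipulation above is valid term by term.
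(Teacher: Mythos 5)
Your proof is correct and rests on the same core idea as the paper's: factor $e^{itP(\xi)} = e^{itQ(\xi)}e^{itR(\xi)}$ with $R=P-Q$, Taylor-expand $e^{itR(\xi)}$ into $\sum_k \frac{(itR(\xi))^k}{k!}$, and feed each term $R(D)^k f$ into the assumed maximal estimate \eqref{Eq2.1.2} for $Q$, summing the resulting geometric-over-factorial series. Where you differ is in the decomposition: the paper performs a full dyadic (Littlewood--Paley) decomposition $f=\sum_k f_k$ with $\mathrm{supp}\,\hat f_k\subset\{|\xi|\sim 2^k\}$, reformulates \eqref{Eq2.1.2} as a frequency-localized estimate with an explicit factor $2^{(s_0+\epsilon/2)k}$, and then uses the extra $\epsilon$ of regularity (taking $s_1=s_0+\epsilon$) to produce a summable gain $2^{-k\epsilon/2}$ over the dyadic scales before letting $\epsilon\to 0$. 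You instead split just once into low and high frequencies at a fixed radius chosen so that $|R|\lesssim 1$ on the support of $\hat f_H$, and then apply \eqref{Eq2.1.2} \emph{directly} to $R(D)^k f_H\in H^s$, noting $\|R(D)^k f_H\|_{H^s}\le C^k\|f\|_{H^s}$. This avoids the dyadic machinery and the $\epsilon$-loss-and-recover step entirely and is arguably cleaner; the paper's route is presumably chosen for consistency with the dyadic decompositions that genuinely are needed in the later theorems.

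On the technical point you flag (exchanging $\sup_{0<t<1}$ with the infinite sum): this is less delicate than you suggest, since for nonnegative terms the inequality $\sup_t\sum_k g_k(t)\le\sum_k\sup_t g_k(t)$ holds unconditionally, so once one knows the Taylor series represents $e^{itP(D)}f_H(x)$ pointwise the estimate follows without Fatou. What does require care is the pointwise identity itself for general $H^s$ data (the $L^2_t$-a.e.\ exceptional set depends on $t$); your proposed reduction to Schwartz data, where $\hat f\in L^1$ makes the series converge absolutely and uniformly for each $x$, is the right fix, and the subsequent density passage to $H^s$ uses the sublinearity of the maximal operator together with the a priori bound. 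The paper's own proof at \eqref{Eq2.1.6}--\eqref{Eq2.1.10} silently performs the same interchange without comment, so you are not on weaker ground than the original.
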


Theorem \ref{theorem2.1} implies the Equivalence between the convergence property of operators with small perturbations. Theorem \ref{theorem2.1} is quite general and can be applied to a wide class of operators. In particular, we concentrate ourselves on $n=2$, and consider the Boussinesq operator defined by
\[P_B(\xi) = |\xi|\sqrt{1+|\xi|^{2}},\]
and obtain the following almost sharp result:

\begin{theorem}\label{theorem2.2}
(1) For each $s > 1/3$, if $f \in H^{s}(\mathbb{R}^{2})$, then
\begin{equation}\label{Eq2.2.1}
\biggl\|\mathop{sup}_{0<t<1} |e^{itP_B(D)}f|\biggl\|_{L^{3}(B(0,1))} \lesssim \|f\|_{H^{s}(\mathbb{R}^{2})}.
\end{equation}

(2) For each $s< \frac{1}{3}$, there exists $f \in L^{2}(\mathbb{R}^{n})$ and $\hat{f}$  supported in the annulus $\{\xi \in \mathbb{R}^{2}: |\xi| \sim R\}$, such that
\begin{equation}\label{Eq2.2.2}
\mathop{lim}_{R \rightarrow +\infty{}} \frac {R^{-s} \|\mathop{sup}_{0<t<1} |e^{itP_B(D)}f|\|_{L^{1}(B(0,1))}}{\|f\|_{L^{2}}} = +\infty.
\end{equation}
\end{theorem}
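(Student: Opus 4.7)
For part (1), decompose the Boussinesq phase as
\[
P_B(\xi) - |\xi|^2 \;=\; |\xi|\sqrt{1+|\xi|^2}-|\xi|^2 \;=\; \frac{|\xi|^2}{|\xi|\sqrt{1+|\xi|^2}+|\xi|^2}\,,
\]
which tends to $\frac{1}{2}$ as $|\xi|\to\infty$; in particular $|P_B(\xi)-|\xi|^2|\lesssim 1$ for large $|\xi|$. Hence Theorem~\ref{theorem2.1} applies with $Q(\xi)=|\xi|^2$ and $s_0=1/3$, reducing (\ref{Eq2.2.1}) to the $L^3(B(0,1))$ maximal estimate for the classical two-dimensional Schr\"odinger operator, which for $s>1/3$ is the main theorem of Du--Guth--Li.

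For part (2), the plan is to transplant the sharp counterexample for the Schr\"odinger maximal operator onto $e^{itP_B(D)}$. The sharpness half of Du--Guth--Li (in the spirit of Bourgain's and Luc\`a--Rogers's constructions) furnishes, for each $s<1/3$, a sequence $\{f_R\}\subset L^2(\mathbb{R}^2)$ with $\widehat{f_R}$ supported in $\{|\xi|\sim R\}$, $\|f_R\|_{L^2}\sim 1$, and
\[
\frac{\|\sup_{0<t<1}|e^{it\Delta}f_R|\|_{L^1(B(0,1))}}{\|f_R\|_{L^2}}\;\gtrsim\; R^{s+\delta}
\]
for some $\delta=\delta(s)>0$. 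On $\{|\xi|\sim R\}$ we write $P_B(\xi)=|\xi|^2+\frac{1}{2}+b(\xi)$ with $|b(\xi)|\lesssim R^{-2}$, whence $e^{itP_B(\xi)}=e^{it/2}e^{it|\xi|^2}\bigl(1+O(tR^{-2})\bigr)$ for $t\in(0,1)$. A Cauchy--Schwarz estimate on the annulus (of area $\sim R$) yields the uniform bound
\[
\sup_{x,\,0<t<1}\bigl|e^{itP_B(D)}f_R(x)-e^{it/2}e^{it\Delta}f_R(x)\bigr|\;\lesssim\; R^{-2}\|\widehat{f_R}\|_{L^1}\;\lesssim\; R^{-1}\|f_R\|_{L^2}.
\]
Combining these two estimates gives
\[
\|\sup_{0<t<1}|e^{itP_B(D)}f_R|\|_{L^1(B(0,1))}\;\gtrsim\; R^{s+\delta}\|f_R\|_{L^2}-CR^{-1},
\]
and dividing by $R^s\|f_R\|_{L^2}$ and letting $R\to+\infty$ yields (\ref{Eq2.2.2}).

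The application of Theorem~\ref{theorem2.1} in part~(1) is essentially automatic once the asymptotic expansion of $P_B$ is recorded. The substantive step lies in part~(2), where the main obstacle is the explicit invocation of the sharp Schr\"odinger counterexample (treated here as a black box from the existing literature) together with the verification that its growth is not eroded by the lower-order perturbation $b(\xi)=O(R^{-2})$. Because the perturbation error contributes only $O(R^{-1})$ while the counterexample produces a positive power of $R$, the transfer is robust and requires no new frequency-space construction tailored to $P_B$.
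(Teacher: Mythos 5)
Your proposal is correct and follows the same two-step strategy as the paper: part (1) is reduced to the Du--Guth--Li estimate via Theorem~\ref{theorem2.1} applied to $Q(\xi)=|\xi|^2$, and part (2) transfers Bourgain's sharp Schr\"odinger counterexample to $e^{itP_B(D)}$ by showing the perturbation error is negligible. The only substantive variation is in how the error is controlled in part (2): the paper restricts the supremum to $0<t<R^{-1}$ (where Bourgain's example concentrates) and uses the crude bound $|P_B(\xi)-|\xi|^2|\lesssim 1$ together with a Taylor expansion in $t$, whereas you keep all $t\in(0,1)$ but invoke the finer expansion $P_B(\xi)=|\xi|^2+\tfrac12+O(|\xi|^{-2})$ and peel off the harmless constant $e^{it/2}$; both yield a pointwise error small compared with the $R^{1/3}$ main term (note in passing that the annulus $\{|\xi|\sim R\}\subset\mathbb{R}^2$ has area $\sim R^2$, so Cauchy--Schwarz gives $\|\widehat{f_R}\|_{L^1}\lesssim R\|f_R\|_{L^2}$ as you state, not "area $\sim R$").
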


By the same method, we can prove that the results in Theorem \ref{theorem2.2} also hold for operators such as Beam operator $P(\xi)= \sqrt{1+|\xi|^{4}}$. But we omit its proof here.

Recently, Buschenhenke, M\"{u}ller and Vargas \cite{BMV1, BMV2} studied Fourier restriction estimate for finite-type perturbations of the hyperbolic paraboloid. We are also curious about how "finite-type perturbations" works in the corresponding generalized Schr\"{o}dinger equation. Next, we concentrate ourselves on $n=2$, $m \ge 1$. Consider a class of operators with phase function
\[P_{m}(\xi) = \xi_{1}\xi_{2} + h_m(\xi_{1}),\]
where $h_{m}(\xi_{1})=\frac{1}{m}\xi_{1}^{m}$ when $m \in \mathbb{N}^{+}$. In this case, the corresponding equations are higher order dispersive equations, see \cite{GC} and its references for more information. when $1 < m <2$, $h_{m}(\xi_{1})=\frac{1}{m}|\xi_{1}|^{m}$, the corresponding equations are non-elliptic Schr\"{o}dinger equations with fractional order perturbations. We obtained the following result.

\begin{theorem}\label{theorem2.3}
(1) For each $m \in \mathbb{N}^{+}$, $s > 1/2$, if $f \in H^{s}(\mathbb{R}^{2})$, then
\begin{equation}\label{Eq2.3.1}
\biggl\|\mathop{sup}_{0<t<1} |e^{itP_{m}(D)}f|\biggl\|_{L^{2}(B(0,1))} \lesssim \|f\|_{H^{s}(\mathbb{R}^{2})}.
\end{equation}

(2) The similarly convergence results hold for $1<m<2$ and $s > \frac{1}{2}$. In particular, for $s< \frac{1}{2}$, there exists $f \in L^{2}(\mathbb{R}^{n})$ and $\hat{f}$  supported in the annulus $\{\xi \in \mathbb{R}^{2}: |\xi| \sim R\}$, such that
\begin{equation}\label{Eq2.3.2}
\mathop{lim}_{R \rightarrow +\infty{}} \frac {R^{-s} \big\|\mathop{sup}_{0<t<1} |e^{itP_{m}(D)}f|\big\|_{L^{1}(B(0,1))}}{\|f\|_{L^{2}}} = +\infty.
\end{equation}
\end{theorem}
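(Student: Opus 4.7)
The plan for the positive parts of (1) and (2) is a unified analysis driven by the structural identity
\begin{equation*}
\nabla^2 P_m(\xi)=\begin{pmatrix}h_m''(\xi_1) & 1\\ 1 & 0\end{pmatrix},\qquad \det\nabla^2 P_m(\xi)\equiv -1.
\end{equation*}
Thus $P_m$ has everywhere-nondegenerate Hessian with mixed signature, exactly as for the model non-elliptic phase $Q(\xi)=\xi_1\xi_2$. Note that Theorem \ref{theorem2.1} does \emph{not} apply, since $|P_m-Q|=|h_m(\xi_1)|$ is unbounded, so a direct analysis is required. After a Littlewood--Paley decomposition it will suffice to prove, for each dyadic $R\geq 1$ and each $\hat{f}_R$ supported in $\{|\xi|\sim R\}$,
\begin{equation*}
\bigl\|\sup_{0<t<1}|e^{itP_m(D)}f_R|\bigr\|_{L^2(B(0,1))}\lesssim R^{1/2}(\log R)^{C}\|f_R\|_{L^2},
\end{equation*}
as the trivial $L^2$-bound handles the low-frequency piece; summing dyadically recovers (\ref{Eq2.3.1}) for any $s>1/2$.

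The main computational step is to write $e^{itP_m(D)}f_R=K^R_t*f_R$ with $K^R_t(x)=\int e^{ix\cdot\xi+itP_m(\xi)}\psi(\xi/R)\,d\xi$. Because the Hessian of $tP_m$ has constant determinant $-t^2$, stationary phase at the unique critical point $\xi^*(x,t)$ solving $x+t\nabla P_m(\xi)=0$ yields the pointwise estimate $|K^R_t(x)|\lesssim t^{-1}$ on the light-cone region $|x+t\nabla P_m(\xi^*)|\lesssim 1$ with rapid decay outside, uniformly in $R$. The maximal $L^2$ estimate then follows from this kernel bound combined with the standard Sobolev-in-$t$ reduction
\begin{equation*}
\sup_{0<t<1}|F(t)|^2\leq |F(0)|^2+2\int_0^1|F(t)||F'(t)|\,dt
\end{equation*}
together with a $TT^*$ computation, parallel to Sj\"olin's proof for the model $Q$. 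For $1<m<2$ in part (2) the same argument applies; the singularity of $h_m''(\xi_1)=(m-1)|\xi_1|^{m-2}$ at $\xi_1=0$ is harmless because the support of $\psi(\cdot/R)$ excludes $0$ once $R\geq 1$.

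For the sharpness claim in (2), I would construct a Knapp-type wave packet adapted to the flat direction of $P_m$: set $\xi^0=(R,-h_m'(R))$ so that $\partial_{\xi_1}P_m(\xi^0)=0$; for $1<m<2$, $|\xi_2^0|\sim R^{m-1}\ll R$, so $|\xi^0|\sim R$, and $\nabla P_m(\xi^0)=(0,R)$. Take $\hat{f}_R=\chi_B$ for a rectangle $B$ centered at $\xi^0$ with side lengths in $\xi_1$ and $\xi_2$ chosen so that the quadratic remainder $\tfrac{1}{2}\tau^\top\nabla^2 P_m(\xi^0)\tau$ and the linear phase in the $\xi_2$-direction each remain of size $O(1)$ uniformly in $t\in(0,1)$. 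Then $e^{itP_m(D)}f_R$ coheres on a spacetime tube of predictable size, and a direct calculation of its $L^1(B(0,1))$-norm against $\|f_R\|_{L^2}$ produces the divergence asserted in (\ref{Eq2.3.2}) for every $s<1/2$.

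The principal obstacle is maintaining uniformity of the stationary-phase analysis in both $R$ and the location of the critical point $\xi^*$, since the diagonal Hessian entry $h_m''(\xi_1)$ varies with $\xi_1$ while the other entries are constant. The constancy of $\det\nabla^2 P_m$ together with the antidiagonal block structure permits a linear change of variables (depending on $\xi^*$) that reduces $\nabla^2 P_m(\xi^*)$ to the model off-diagonal matrix, so that the asymptotic constants in stationary phase are bounded independently of $\xi^*$; this is what ultimately makes the reduction to the Sj\"olin-type non-elliptic estimate legitimate and produces the sharp exponent $1/2$.
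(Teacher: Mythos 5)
Your recognition that Theorem~\ref{theorem2.1} does not apply (since $|h_m|$ is unbounded) is correct, and the Littlewood--Paley reduction to a dyadic $R^{1/2}$-type bound is the right first step. However, both the positive and negative halves of your proposal take routes that are genuinely different from the paper's, and each has concrete gaps.

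For the positive direction, the paper never performs stationary phase: it proves Lemma~\ref{lemma3.1}, a local-smoothing maximal bound of the form
\begin{equation*}
\Bigl\|\sup_{0<t<1}|e^{itP_m(D)}g|\Bigr\|_{L^2(B(0,1))}\lesssim \|g\|_{L^2}+\Bigl(\int\frac{|P_m|^2}{|\nabla P_m|}|\hat g|^2\Bigr)^{1/4}\Bigl(\int\frac{1}{|\nabla P_m|}|\hat g|^2\Bigr)^{1/4},
\end{equation*}
obtained from the Sobolev-in-$t$ reduction plus Cauchy--Schwarz plus the Kenig--Ponce--Vega $L^2_{t,x}$ local-smoothing estimate after a linear rotation of variables. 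This gives an explicit, Plancherel-based bound weighted by $1/|\nabla P_m|$, and the remaining work is a careful decomposition into the three regions $|\xi_2|\gg,\sim,\ll|\xi_1|^{m-1}$ (with an extra dyadic decomposition in $\xi_1$ in the third region, costing a harmless logarithm) to control the weight. Your plan, by contrast, rests on a stationary-phase kernel bound $|K_t^R(x)|\lesssim t^{-1}$ ``with rapid decay outside'' the light cone, followed by a $TT^*$ computation. The decay claim is not right for a hyperbolic Hessian: outside the critical set the kernel of an oscillatory integral with indefinite Hessian does not decay rapidly in all directions but only away from the characteristic cone, and the $TT^*$ kernel involves $e^{i(x-x')\cdot\xi+i(t(x)-t'(x'))P_m(\xi)}$ whose effective Hessian degenerates as $t\to t'$. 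You would still need something equivalent to the KPV local-smoothing input to close the argument; the Hessian determinant being $-1$ is suggestive but does not by itself deliver the $R^{1/2}$ bound. You also do not address the region where $|\nabla P_m|$ is comparatively small, which is precisely where the paper must work hardest (the decomposition of $A_{k,3}$ into $A_{k,3}^l$).

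For the sharpness claim in part (2), your Knapp construction imposes that the quadratic remainder and the linear $\xi_2$-phase be $O(1)$ \emph{uniformly in $t\in(0,1)$}; this is too strong. Requiring $(x_2+tR)\tau_2=O(1)$ for all $t\in(0,1)$ forces the $\xi_2$-side length to be $\lesssim 1/R$, and the resulting ratio $\|\sup_t|e^{itP_m(D)}f|\|_{L^1}/\|f\|_{L^2}$ is $O(1)$, giving no information beyond $s\geq 0$. The paper instead uses the long rectangle $A_R=[R,R+1]\times[R,3R/2]$ (so $\delta_1=1$, $\delta_2\sim R$) and, crucially, chooses $t=t(x)\sim 1/R$ depending on $x$ to cancel the linear $\xi_2$-phase; only then are the quadratic terms $t\,\tau_1\tau_2\sim 1$ and $t\,h_m''\tau_1^2\sim R^{m-3}\to 0$ controlled, yielding $\|\sup_t|e^{itP_m(D)}f|\|_{L^1}\gtrsim R$ against $\|f\|_{L^2}\sim R^{1/2}$ and hence $s\geq 1/2$. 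You should replace the uniform-in-$t$ requirement by an $x$-dependent choice of $t\sim 1/R$, and you must also control the linear $\xi_1$-phase $(x_1+t\partial_{\xi_1}P_m(\xi^0))\tau_1$, which your sketch omits.
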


By \cite{RVV}, $s > \frac{1}{2}$ is likely sharp for the convergence result to hold in the non-elliptic case up to the end point. Theorem \ref{theorem2.3} implies that the "finite-type perturbations" does not change the convergence result for $s > \frac{1}{2}$. Moreover, for $1 < m <2$, our convergence result is sharp up to the end point.

Furthermore, it is interesting to seek the convergence speed of $e^{itP(D)}f(x)$ as $t$ tends to $0$ if $f$ has more regularity. The problem is, suppose that $e^{itP(D)}f(x)$ converge to $f$ for $f \in H^{s}(\mathbb{R}^{n})$ as $t$ tends to $0$, whether or not it is possible that, for $f \in H^{s + \delta}(\mathbb{R}^{n})$, $\delta \ge 0$,
\begin{equation}
e^{itP(D)}f(x) - f(x) = o(t^{\theta(\delta)})
\end{equation}
almost everywhere for some $\theta(\delta) \ge 0$? Cao, Fan and Wang \cite{CFW} proved this property in the elliptic case when $n \ge 1$, $P(\xi) = |\xi|^{2}$, $\theta(\delta) = \frac{\delta}{2}$, $0 \le \delta <2$, and in the fractional case when $n = 1$, $P(\xi)=|\xi|^{\alpha}, \alpha >1$, $\theta(\delta) = \frac{\delta}{\alpha}$, $0 \le \delta < \alpha$.

In this paper, we obtain the convergence rate for a class of Schr\"{o}dinger operators with polynomial growth:

\begin{theorem}\label{theorem2.4}
If there exist $m > 0$, $s_{0} >0$ such that
\begin{equation}\label{Eq2.4.1}
|P(\xi)| \lesssim |\xi|^{m}, |\xi| \rightarrow +\infty,
\end{equation}
and for each $s > s_{0}$,
\begin{equation}\label{Eq2.4.2}
\biggl\|\mathop{sup}_{0<t<1} |e^{itP(D)}f|\biggl\|_{L^{p}(B(0,1))} \lesssim \|f\|_{H^{s}(\mathbb{R}^{n})},  \:\ p \ge 1,
\end{equation}
then for all $f \in H^{s+\delta}(\mathbb{R}^{n})$, $0 \le \delta <m$,
\begin{equation}\label{Eq2.4.3}
e^{itP(D)}f(x) - f(x) = o(t^{\delta /m}), \:\ a.e. \text{\quad as \quad} t \rightarrow 0^{+}.
\end{equation}
\end{theorem}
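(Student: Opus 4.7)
Following the scheme of Cao--Fan--Wang \cite{CFW}, the plan is to reduce the pointwise statement to an $L^p$ maximal estimate for the weighted difference operator
$$T_* f(x) := \sup_{0<t<1} \frac{|e^{itP(D)}f(x) - f(x)|}{t^{\delta/m}},$$
and then transfer it to an a.e.\ $o(t^{\delta/m})$ statement by a density argument. The Schwartz case is essentially free: for $g \in \mathcal{S}(\mathbb{R}^n)$, the identity $e^{itP(\xi)}-1 = i\int_0^t P(\xi)e^{isP(\xi)}\,ds$ together with the crude bound $|P(\xi)| \lesssim 1+|\xi|^m$ (the hypothesis plus continuity of $P$) gives $|e^{itP(D)}g(x) - g(x)| \le C_g\, t$ uniformly in $x$, and since $\delta/m < 1$ this forces $t^{-\delta/m}|e^{itP(D)}g - g| \to 0$ as $t \to 0^+$. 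Once the maximal bound $\|T_* f\|_{L^p(B(0,1))} \lesssim \|f\|_{H^{s+\delta}}$ is in hand, approximating $f \in H^{s+\delta}$ by Schwartz $g$ with $\|f-g\|_{H^{s+\delta}} < \varepsilon$ and applying Chebyshev to the pointwise inequality $\limsup_{t\to 0^+}t^{-\delta/m}|e^{itP(D)}f - f| \le T_*(f-g)$ forces the a.e.\ conclusion on $B(0,1)$; translation-invariance of $P(D)$ upgrades this to a.e.\ on $\mathbb{R}^n$.

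\smallskip
For the maximal inequality, decompose $f = f_0 + \sum_{k\ge 1} f_k$ via a Littlewood--Paley partition with $\widehat{f_k}$ supported in $|\xi|\sim 2^k$, and split the time supremum at the critical scale $t_k := 2^{-km}$. On the short-time regime $0 < t \le t_k$, use the fundamental-theorem identity $e^{itP(D)}f_k - f_k = i\int_0^t e^{isP(D)} P(D) f_k\, ds$ to write $|e^{itP(D)}f_k - f_k| \le t \sup_{0<s<1}|e^{isP(D)} P(D) f_k|$; since $|P(\xi)| \lesssim 2^{km}$ on the support of $\widehat{f_k}$, one has $\|P(D)f_k\|_{H^{s_1}} \lesssim 2^{km}\|f_k\|_{H^{s_1}}$, and applying hypothesis (\ref{Eq2.4.2}) to the frequency-localized function $P(D)f_k$, together with the weight bound $t \cdot t^{-\delta/m} \le 2^{-k(m-\delta)}$, yields the dyadic estimate $\lesssim 2^{k\delta}\|f_k\|_{H^{s_1}}$. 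On the long-time regime $t_k < t < 1$ the weight alone satisfies $t^{-\delta/m} \le 2^{k\delta}$, and the triangle inequality $|e^{itP(D)}f_k - f_k| \le |e^{itP(D)}f_k| + |f_k|$ combined with a direct application of (\ref{Eq2.4.2}) delivers the same dyadic estimate $\lesssim 2^{k\delta}\|f_k\|_{H^{s_1}}$. The low-frequency piece $f_0$ is disposed of by the same Taylor bound used for Schwartz functions.

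\smallskip
To sum the pieces, fix $s > s_0$ and choose $s_1 \in (s_0, s)$ in the hypothesis. Since $\|f_k\|_{H^{s_1}} \approx 2^{-k(s+\delta-s_1)}\|f_k\|_{H^{s+\delta}}$, we have $2^{k\delta}\|f_k\|_{H^{s_1}} \approx 2^{-k(s-s_1)}\|f_k\|_{H^{s+\delta}}$, and Cauchy--Schwarz in $k$ with the geometric factor $\sum_k 2^{-2k(s-s_1)} < \infty$ yields $\|T_* f\|_{L^p(B(0,1))} \lesssim \|f\|_{H^{s+\delta}}$. The heart of the argument, and the main obstacle, is the precise matching of the two time regimes at the threshold $t_k = 2^{-km}$: the short-time derivative gain $t\cdot 2^{km}$ and the long-time weight cost $t^{-\delta/m}$ each produce exactly the factor $2^{k\delta}$, which is in turn absorbed by the extra $\delta$ derivatives in $H^{s+\delta}$ --- and it is this balance that forces the range $0\le \delta < m$. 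A secondary technical point is the dyadic summation, which is handled by exploiting the slack $s_1 < s$ available in hypothesis (\ref{Eq2.4.2}).
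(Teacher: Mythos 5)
Your proof is correct and follows essentially the same path as the paper: reduce to a local $L^q$ maximal estimate for the weighted difference quotient, transfer to a.e.\ convergence by a density/Chebyshev argument, decompose in Littlewood--Paley pieces, and split the time supremum at the critical scale $2^{-km}$, with the long-time regime handled exactly as in the paper. The one small variation is in the short-time regime: the paper Taylor-expands $e^{itP}-1=\sum_{j\ge1}(itP)^j/j!$ and controls each term by Plancherel (using $q\le 2$), never invoking the maximal hypothesis there, whereas you use the first-order identity $e^{itP}-1=i\int_0^t Pe^{isP}\,ds$ and then apply hypothesis (\ref{Eq2.4.2}) to $P(D)f_k$ --- both yield the same dyadic bound $2^{k\delta}\|f_k\|_{H^{s_1}}$ and both are valid.
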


Note that the convergence rate in Theorem \ref{theorem2.4} depends on the growth condition of the phase function, but independent of its gradient and the dimension of the spatial space. Theorem \ref{theorem2.4} is quite general and can be applied to a wide class of operators, such as the non-elliptic Schr\"{o}dinger operators ($P(\xi)=\xi_{1}^{2}-\xi_{2}^{2} \pm \cdot \cdot \cdot \pm \xi_{n}^{2}$), the fractional Schr\"{o}dinger operators ($P(\xi)= |\xi|^{\alpha}$, $\alpha >1$) and the Boussinesq operator ($P(\xi)=|\xi|\sqrt{1+|\xi|^{2}}$). It also generalized the previous result of \cite{CFW}.


\section{Proof of Theorem \ref{theorem2.1}}\label{Section 2}

\textbf{Proof of Theorem \ref{theorem2.1}.}
 In order to show (\ref{Eq2.1.3}), we decompose $f$ as
\[f=\sum_{k=0}^{\infty}{f_{k}},\]
where $supp \hat{f_{0}} \subset B(0,1)$, $supp \hat{f_{k}} \subset \{\xi: |\xi| \sim 2^{k}\}, k \ge 1$. Then we have
\begin{equation}\label{Eq2.1.4}
\biggl\|\mathop{sup}_{0<t<1} |e^{itP(D)}f|\biggl\|_{L^{p}(B(0,1))} \le \sum_{k=0}^{\infty}{\biggl\|\mathop{sup}_{0<t<1} |e^{itP(D)}f_{k}|\biggl\|_{L^{p}(B(0,1))}}.
\end{equation}

For $k \lesssim 1$, since for each $x \in B(0,1)$,
\[\biggl|e^{itP(D)}f_{k}(x)\biggl| \lesssim \|f_{k}\|_{L^{2}(\mathbb{R}^{n})},\]
it is obvious that
\begin{equation}\label{Eq2.1.5}
\biggl\|\mathop{sup}_{0<t<1} |e^{itP(D)}f_{k}|\big\|_{L^{p}(B(0,1))} \lesssim \|f\|_{H^{s}(\mathbb{R}^{n})}.
\end{equation}

For $k \gg 1$, by Taylor's formula, for each $k$,
\begin{equation}\label{Eq2.1.6}
\biggl|e^{itP(D)}(f_{k})-e^{itQ(D)}(f_{k})\biggl| \le \sum_{j=1}^{\infty}{\frac{t^{j}}{j!} \biggl|\int_{\mathbb{R}^{n}}{e^{ix\cdot\xi + itQ(\xi)}[P(\xi)-Q(\xi)]^{j}\hat{f_{k}}}(\xi)d\xi\biggl|  }.
\end{equation}
It is obvious that
\begin{align}\label{Eq2.1.7}
\biggl\|\mathop{sup}_{0<t<1} |e^{itP(D)}f_{k}| \biggl\|_{L^{p}(B(0,1))} &\le \biggl\|\mathop{sup}_{0<t<1} |e^{itP(D)}(f_{k})-e^{itQ(D)}{(f_{k})}| \biggl\|_{L^{p}(B(0,1))} \nonumber\\
& \:\ + \biggl\|\mathop{sup}_{0<t<1} |e^{itQ(D)}f_{k}| \biggl\|_{L^{p}(B(0,1))}.
\end{align}

For $\forall\epsilon>0$, from (\ref{Eq2.1.2}), for each $g$ whose Fourier transform is supported in $\{\xi: |\xi| \sim 2^{k}\}$, we have
\begin{equation}\label{Eq2.1.8}
\biggl\|\mathop{sup}_{0 < t<1} |e^{itQ(D)}g|\biggl\|_{L^{p}(B(0,1))} \lesssim 2^{(s_{0}+\frac{\epsilon}{2})k}\|g\|_{L^{2}(\mathbb{R}^{n})}.
\end{equation}
Let $s_1=s_0+\epsilon$, then
\begin{equation}\label{Eq2.1.9}
\biggl\|\mathop{sup}_{0 < t<1} |e^{itQ(D)}f_{k}|\biggl\|_{L^{p}(B(0,1))} \lesssim 2^{-\frac{k\epsilon}{2}}\|f\|_{H^{s_1}(\mathbb{R}^{n})}.
\end{equation}
Inequalities (\ref{Eq2.1.6}), (\ref{Eq2.1.8}) and (\ref{Eq2.1.1}) imply
\begin{align}\label{Eq2.1.10}
&\biggl\|\mathop{sup}_{0<t<1} |e^{itP(D)}(f_{k})-e^{itQ(D)}{(f_{k})}| \biggl\|_{L^{p}(B(0,1))}  \nonumber\\
&\le \sum_{j=1}^{\infty}{\frac{1}{j!} \biggl\|\mathop{sup}_{0<t<1}\biggl|\int_{ \mathbb{R}^{n}}{e^{ix\cdot\xi + itQ(\xi)}[P(\xi)-Q(\xi)]^{j}\hat{f_{k}}}(\xi)d\xi\biggl|\biggl\|_{L^{p}(B(0,1))}} \nonumber\\
&\le \sum_{j=1}^{\infty}{\frac{2^{k(s_{0}+\frac{\epsilon}{2})}}{j!} \|[P(\xi)-Q(\xi)]^{j}\hat{f_{k}}}(\xi)\|_{L^{2}(\mathbb{R}^{n})} \nonumber\\
&\le \sum_{j=1}^{\infty}{\frac{C^{j}2^{-\frac{k\epsilon}{2}}}{j!} \|f}\|_{H^{s_1}(\mathbb{R}^{n})} \nonumber\\
&\lesssim 2^{-\frac{k\epsilon}{2}}\|f\|_{H^{s_1}(\mathbb{R}^{n})}.
\end{align}
Inequalities (\ref{Eq2.1.7}), (\ref{Eq2.1.9}) and (\ref{Eq2.1.10}) yield for $k \gg 1$,
\begin{align}\label{Eq2.1.11}
\biggl\|\mathop{sup}_{0<t<1} |e^{itP(D)}f_{k}|\biggl\|_{L^{p}(B(0,1))} &\lesssim 2^{-\frac{\epsilon k}{2}}\|f\|_{H^{s_1}(\mathbb{R}^{n})}.
\end{align}

Combing  (\ref{Eq2.1.4}), (\ref{Eq2.1.5}) and (\ref{Eq2.1.11}),  inequality (\ref{Eq2.1.3}) holds true for $s_1$. By the arbitrariness of $\epsilon$, in fact, we can get for any $s>s_0$, inequality (\ref{Eq2.1.3}) remains true.


\section{Proof of Theorem \ref{theorem2.2}}\label{Section 3}
\textbf{Proof of Theorem \ref{theorem2.2}.} (1) Inequality (\ref{Eq2.2.1}) follows directly form Theorem \ref{theorem2.1} and the following convergence result for Schr\"{o}dinger operator (\cite{DGL}).

\begin{theorem}\label{lemma3.2}(\cite{DGL}) For any $s>1/3$, the following bounds hold: for any function $\hat{f}\in H^s(\mathbb{R}^2)$,
\begin{equation*}
\biggl\|\mathop{sup}_{0<t<1}|e^{it\Delta}f(x)|\biggl\|_{L^3(B(0,1))} \leq C_s\|f\|_{H^s}.
\end{equation*}
\end{theorem}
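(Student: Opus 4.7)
The plan is to follow the polynomial-partitioning strategy of Du--Guth--Li, reducing the maximal estimate to a local space-time $L^3$ bound for the Schrödinger extension operator
\[Eg(x,t) := \int_{B(0,1)} e^{i(x\cdot\xi + t|\xi|^2)} g(\xi)\,d\xi.\]
Standard preliminaries come first. By Littlewood--Paley decomposition one localises $\hat f$ to a dyadic annulus $\{|\xi|\sim N\}$; parabolic rescaling $\xi\mapsto \xi/N$, $x\mapsto x/N$, $t\mapsto t/N^2$ then transfers the matter, up to $N^\epsilon$ losses, to proving that for every $g$ supported in the unit ball of $\mathbb{R}^2$ and every $R\gg 1$,
\[\|Eg\|_{L^3(B_R^{2+1})} \lesssim_\epsilon R^{\epsilon}\, R^{1/3}\, \|g\|_{L^2}.\]
The standard $\epsilon$-removal lemma (Tao, Bourgain) upgrades such an estimate with arbitrarily small $\epsilon$-loss to the global maximal bound $\|\sup_{0<t<1}|e^{it\Delta}f|\|_{L^3(B(0,1))}\lesssim\|f\|_{H^s}$ for every $s>1/3$, and the supremum in $t$ is absorbed by Sobolev embedding in the $t$-variable at the price of a negligible derivative.

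Next comes the core argument: a wave packet decomposition followed by polynomial partitioning. Decompose $g=\sum_\theta g_\theta$ over $R^{-1/2}$-caps $\theta\subset B(0,1)$; each piece $Eg_\theta$ is further expanded into wave packets supported on tilted tubes $T$ of dimensions $R^{1/2}\times R^{1/2}\times R$ in $B_R^{2+1}$, whose directions are dictated by $\nabla(|\xi|^2)$ on $\theta$. After pigeonholing the tubes into classes of comparable amplitude, apply Guth's polynomial partitioning to find a polynomial $P$ of degree $D=R^{\delta}$ whose zero set $Z(P)$ cuts $B_R^{2+1}$ into $\sim D^3$ cells carrying balanced $L^3$ mass. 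Each tube is then classified as \emph{transverse} (entering $\lesssim D$ cells) or \emph{tangential} (contained in the $R^{1/2}$-neighbourhood of $Z(P)$).

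The transverse contribution is handled by summing over cells and closing by induction on $R$, together with a trilinear Strichartz estimate deduced from the Bennett--Carbery--Tao multilinear restriction theorem for the paraboloid in $\mathbb{R}^3$; this is what delivers the $L^3$ exponent with constant $R^\epsilon$ whenever the three interacting caps are transverse in the Kakeya sense. The real obstacle is the tangential case, where all significant wave packets lie in an $R^{1/2}$-neighbourhood of the two-dimensional variety $Z(P)$. Here I would invoke Wongkew-type volume bounds for the intersection of a tube with a neighbourhood of $Z(P)$ and a polynomial Wolff axiom to control how many distinct tube directions can cluster on $Z(P)$; combined with refined $L^2$ orthogonality on the $R^{-1/2}$-caps, this yields the decisive gain that forces precisely the exponent $1/3$ (and not $1/4$, which is what a naive two-dimensional bilinear argument would give).

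Finally, the whole construction is run as an induction on scale $R$ with two induction hypotheses: one on $R$ itself and one on the dimension of the variety containing the packets. Choosing $\delta\ll\epsilon$ and balancing the partitioning losses $D^{O(1)}=R^{O(\delta)}$ against the induction gain allows everything to be absorbed into $R^\epsilon$. Summing the Littlewood--Paley pieces and undoing the parabolic rescaling then produces the stated maximal bound. The main obstacle, as indicated, is the tangential analysis on $Z(P)$: extracting the sharp exponent $1/3$ requires both the refined $L^2$ decoupling on the $R^{-1/2}$-caps and a careful two-step (nested) induction on scale and on algebraic dimension, which is precisely where the Du--Guth--Li argument departs from all earlier approaches and where the threshold $s>1/3$ is actually pinned down.
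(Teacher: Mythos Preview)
The paper does not supply a proof of this statement at all: Theorem~\ref{lemma3.2} is quoted verbatim from \cite{DGL} and used as a black box in the proof of Theorem~\ref{theorem2.2}(1). So there is nothing to compare against on the paper's side; your outline is, in effect, a sketch of the Du--Guth--Li argument itself rather than of anything in the present paper.

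As a sketch of \cite{DGL} your description is broadly faithful---Littlewood--Paley plus parabolic rescaling to reduce to a local $L^3$ extension estimate, wave packet decomposition, Guth's polynomial partitioning, and an induction on scales splitting into cellular/transverse/tangential contributions. Two small corrections, though. First, the polynomial Wolff axiom is the tool in the higher-dimensional work of Du--Zhang \cite{DZ}; in the two-dimensional case of \cite{DGL} the tangential (``algebraic'') case is handled instead via the fact that the wave packets are confined to a neighbourhood of a \emph{two}-dimensional variety, where a refined $L^4$ Strichartz/C\'ordoba estimate suffices. Second, the $L^3$ exponent in \cite{DGL} is not obtained from a trilinear estimate per se but from interpolating the bilinear/linear structure inside the induction; the multilinear input is closer to the bilinear restriction theorem for the paraboloid than to full Bennett--Carbery--Tao. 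These are refinements of emphasis rather than fatal gaps, but if you intend to actually reproduce the proof you will need to follow \cite{DGL} more closely on the tangential case.
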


(2) In \cite{B}, Bourgain actually showed that there exists $f$,
\[\hat{f}(\xi) = \chi_{A_{R}}(\xi),\]
where $A_{R}$ is the subset of $\{\xi \in \mathbb{R}^{2}: |\xi| \sim R\}$ defined by
\[A_{R}= \bigcup_{l \in \mathbb{N}^{+}, l \sim R^{1/3}}A_{R,l},\]
\[A_{R,l}= [R-R^{1/2}, R+R^{1/2}] \times [R^{2/3}l, R^{2/3}l+1].\]
And there exists a set $S$ with positive measure such that for each $x \in S$, there exists $t, |t| \le R^{-1}$,
\begin{equation}
|e^{it\Delta}f(x)| \ge R^{3/4}.
\end{equation}
Hence,
\begin{equation}\label{Eq2.2.3}
\mathop{sup}_{0<t<R^{-1}} |e^{it\Delta}f(x)| \ge R^{3/4}.
\end{equation}
By Taylor expansion,
\begin{align}\label{Eq2.2.4}
\mathop{sup}_{0<t<R^{-1}} \biggl|e^{it\Delta}f(x)\biggl| &\le \mathop{sup}_{0<t<R^{-1}} \biggl|e^{it\Delta}f(x)-e^{itP_B(D)}f(x)\biggl| + \mathop{sup}_{0<t<R^{-1}} \biggl|e^{itP_B(D)}f(x)\biggl| \nonumber\\
&\le \sum_{j=1}^{\infty}{\frac{R^{-j}}{j!} \int_{\mathbb{R}^{n}}{|\hat{f}}(\xi)|d\xi  } + \mathop{sup}_{0<t<1} |e^{itP_B(D)}f(x)| \nonumber\\
&\le R^{-1}R^{1/3}R^{1/2}+  \mathop{sup}_{0<t<1} \biggl|e^{itP_B(D)}f(x)\biggl|.
\end{align}
Inequalities (\ref{Eq2.2.3}) and (\ref{Eq2.2.4}) imply
\begin{equation}\label{Eq2.2.5}
 \big\|\mathop{sup}_{0<t<1} |e^{itP_B(D)}f|\big\|_{L^{1}(B(0,1))} \gtrsim R^{3/4},
\end{equation}
which implies (\ref{Eq2.2.2}).


\section{Proof of Theorem \ref{theorem2.3}}\label{Section 4}
We first prove the following Lemma \ref{lemma3.1}.
\begin{lemma}\label{lemma3.1}
Assume that $g$ is a Schwartz function whose Fourier transform is supported away from $0$. Then
\begin{align}\label{Eq2.3.3}
\biggl\|\mathop{sup}_{0<t<1} |e^{itP_{m}(D)}g|\biggl\|_{L^{2}(B(0,1))}
\le \|g\|_{L^{2}(\mathbb{R}^{2})} + \biggl(\int{\frac{|P_{m}(\xi)|^{2}}{|\nabla P_{m}(\xi)|} |\hat{g}(\xi)|^{2} d\xi}\biggl)^{\frac{1}{4}}\biggl(\int{\frac{1}{|\nabla P_{m}(\xi)|} |\hat{g}(\xi)|^{2} d\xi}\biggl)^{\frac{1}{4}}.
\end{align}
\end{lemma}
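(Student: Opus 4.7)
The plan is to reduce the supremum in $t$ to an $L^2_t$ integral via the fundamental theorem of calculus, then decouple in space--time by Cauchy--Schwarz, and finally apply a Kenig--Ponce--Vega type local smoothing estimate to each of the resulting factors. Write $F(x,t) = e^{itP_m(D)}g(x)$, so that $\partial_t F = iP_m(D)F$ has Fourier symbol $iP_m(\xi)e^{itP_m(\xi)}\hat g(\xi)$, and start from the identity
\[
\sup_{0<t<1}|F(x,t)|^2 \le |g(x)|^2 + 2\int_0^1 |F(x,s)|\,|\partial_s F(x,s)|\,ds.
\]

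Integrating over $B(0,1)$, the first term contributes $\|g\|_{L^2}^2$, which accounts for the first summand on the right-hand side of \eqref{Eq2.3.3}. For the cross-term, Cauchy--Schwarz in $(x,s)\in B(0,1)\times(0,1)$ gives
\[
\int_{B(0,1)}\!\!\int_0^1 |F||\partial_s F|\,ds\,dx \le \biggl(\int_{B(0,1)}\!\!\int_0^1 |F|^2\,ds\,dx\biggr)^{\!1/2}\biggl(\int_{B(0,1)}\!\!\int_0^1 |\partial_s F|^2\,ds\,dx\biggr)^{\!1/2}.
\]
Taking square roots of the resulting pointwise bound for $\sup_t|F|^2$ (using $\sqrt{a+b}\le\sqrt a+\sqrt b$) produces exactly the structure of the right-hand side of \eqref{Eq2.3.3}: the $\|g\|_{L^2}$ term plus the geometric mean of two local smoothing-type quantities, each raised to the $1/4$ power overall once we take fourth roots of the two $L^2$-in-$(x,s)$ integrals.

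What remains is therefore a Kato--Kenig--Ponce--Vega style local smoothing bound of the form
\[
\int_{B(0,1)}\!\!\int_0^1 |e^{isP_m(D)}h(x)|^2 \,ds\,dx \lesssim \int_{\mathbb{R}^2} \frac{|\hat h(\xi)|^2}{|\nabla P_m(\xi)|}\,d\xi,
\]
which we apply once with $h=g$ and once with $h$ having Fourier transform $P_m(\xi)\hat g(\xi)$ (giving the weight $|P_m(\xi)|^2/|\nabla P_m(\xi)|$ in the second factor). To prove this smoothing estimate, extend the time cut-off to a Schwartz bump $\phi(s)$ equal to $1$ on $[0,1]$ and use Plancherel in $(x,s)$ on $\phi(s)\chi_{B(0,1)}(x)e^{isP_m(D)}h$. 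The spatial and temporal integrals will be estimated by the co-area change of variables adapted to the level sets of $P_m$. Here the structure $P_m(\xi)=\xi_1\xi_2+h_m(\xi_1)$ is very convenient: $\partial_{\xi_2}P_m(\xi)=\xi_1$ is nonzero on the support of $\hat g$ (which is bounded away from the origin), so for each fixed $\xi_1$ the map $\xi_2\mapsto P_m(\xi)$ is a diffeomorphism and one obtains the weight $1/|\partial_{\xi_2}P_m|\ge 1/|\nabla P_m|$ after the change of variables; a standard duality / $TT^*$ reformulation then closes the estimate.

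The main obstacle is the local smoothing step: one must be careful that the $1/|\nabla P_m|$ weight is produced in the Plancherel / co-area argument despite the fact that $\nabla P_m$ may degenerate (along $\xi_1=0$ the partial $\partial_{\xi_2}P_m$ vanishes, and the other partial $\partial_{\xi_1}P_m=\xi_2+h_m'(\xi_1)$ need not be bounded below). The assumption that $\hat g$ is supported away from the origin, together with the fact that the bound is vacuous when the right-hand side diverges, allows one to restrict the co-area argument to the region where the chosen directional derivative of $P_m$ is nonzero; everything else can be absorbed into the trivial $L^2$ term $\|g\|_{L^2}$ via a frequency cut-off near $\xi_1=0$.
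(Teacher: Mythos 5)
Your opening reduction (fundamental theorem of calculus in $t$, then Cauchy--Schwarz, then $\sqrt{a+b}\le\sqrt a+\sqrt b$) is exactly the paper's first step, so that portion is sound. The genuine gap is in the local smoothing step, and you have flagged it without resolving it. Your co-area argument in the $\xi_2$ direction produces the weight $1/|\partial_{\xi_2}P_m|$, and you correctly observe that $1/|\partial_{\xi_2}P_m|\ge 1/|\nabla P_m|$; but this inequality runs the \emph{wrong way} for what you need. Bounding the space--time $L^2$ integral by $\int |\hat h|^2/|\partial_{\xi_2}P_m|\,d\xi$ gives an upper bound that is larger than $\int|\hat h|^2/|\nabla P_m|\,d\xi$, and a larger upper bound does not imply the smaller one stated in \eqref{Eq2.3.3}. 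In other words, the single-directional smoothing estimate is strictly weaker than the lemma; it cannot be promoted to the full gradient weight by the pointwise comparison you invoke. And the difference is not cosmetic: in the applications (e.g.\ the region $A_{k,1}$ where $|\xi_2|\gg|\xi_1|^{m-1}$ and $|\xi_1|$ may be small), $|\partial_{\xi_2}P_m|=|\xi_1|$ can be tiny while $|\nabla P_m|\sim|\xi_2|+|\xi_1|$ is large, so the weaker weight does not yield the bound $\min\{2^{k/2},2^{k/2(m-1)}\}$ that the subsequent estimate \eqref{Eq2.3.8} relies on.

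The paper closes this gap differently: it performs the linear change of variables $\xi=(\eta_1+\eta_2,\eta_1-\eta_2)$ in frequency together with the dual change $y=(x_1+x_2,x_1-x_2)$ in space (which preserves the local $L^2(B)$ structure up to enlarging the ball), and then invokes Theorem~4.1 of \cite{KPV} as a black box. That theorem delivers the full gradient weight $1/|\nabla P_m(\xi)|$, not just a single directional derivative, and the rotation is there to put the phase into a form where the hypotheses of that theorem apply. If you want to salvage your approach without appealing to KPV's theorem directly, you would need either to reproduce the mechanism behind the full-gradient smoothing estimate (which genuinely uses a two-dimensional co-area/change-of-variables adapted to the level sets of $P_m$, not a one-variable substitution in $\xi_2$), or to decompose frequency space into regions where different directional derivatives of $P_m$ dominate $|\nabla P_m|$ and apply the one-variable smoothing in the corresponding direction on each piece; neither of these is done in your sketch. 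The closing remark about absorbing the strip near $\xi_1=0$ into the $\|g\|_{L^2}$ term is also too vague: that strip is unbounded and the hypothesis only excludes a neighborhood of the origin, not of the whole line $\{\xi_1=0\}$.
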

\begin{proof}
For each $x \in B(0,1)$,
\begin{align}
\mathop{sup}_{0<t<1}|e^{itP_{m}(D)}g(x)|^{2} &\le|g(x)|^{2} + \biggl(\int_{0}^{1}|\int_{\mathbb{R}^{2}}{e^{ix \cdot \xi + itP_{m}(\xi)}}\hat{g}(\xi)d\xi|^{2}dt\biggl)^{\frac{1}{2}}\\
&\times\biggl(\int_{0}^{1}|\int_{\mathbb{R}^{2}}{e^{ix \cdot \xi+itP_{m}(\xi)}}P_{m}(\xi)\hat{g}(\xi)d\xi|^{2}dt\biggl)^{\frac{1}{2}}.
\end{align}
By H\"{o}lder's inequality,
\begin{align}\label{Eq2.3.4}
\biggl\|\mathop{sup}_{0<t<1} |e^{itP_{m}(D)}g|\biggl\|_{L^{2}(B(0,1))} &\le \|g\|_{L^{2}(\mathbb{R}^{2})} + \biggl(\int_{B(0,1)}{\int_{0}^{1}|\int_{\mathbb{R}^{2}}{e^{ix \cdot \xi + itP_{m}(\xi)}}\hat{g}(\xi)d\xi|^{2}}dtdx\biggl)^{\frac{1}{4}} \nonumber\\
& \:\ \times \biggl(\int_{B(0,1)}{\int_{0}^{1}|\int_{\mathbb{R}^{2}}{e^{ix \cdot \xi + itP_{m}(\xi)}}P_{m}(\xi)\hat{g}(\xi)d\xi|^{2}}dtdx\biggl)^{\frac{1}{4}}.
\end{align}
By Theorem 4.1 in \cite{KPV},
\begin{align}\label{Eq2.3.5}
&\int_{B(0,1)}{\int_{0}^{1}\biggl|\int_{\mathbb{R}^{2}}{e^{ix_{1}  \xi_{1} + ix_{2}  \xi_{2} + itP_{m}(\xi_{1},\xi_{2})}}\hat{g}(\xi_{1}, \xi_{2})d\xi_{1}d\xi_{2}\biggl|^{2}}dtdx \nonumber\\
&\lesssim \int_{B(0,1)}{\int_{0}^{1}\biggl|\int_{\mathbb{R}^{2}}{e^{i\eta_{1}(x_{1} + x_{2}) + i\eta_{2} (x_{1}-x_{2}) + itP_{m}(\eta_{1}+\eta_{2}, \eta_{1}-\eta_{2})}}\hat{g}(\eta_{1}+\eta_{2}, \eta_{1}-\eta_{2})d\eta_{1}d\eta_{2}\biggl|^{2}}dtdx \nonumber\\
&\lesssim \int_{B(0,2)}{\int_{0}^{1}\biggl|\int_{\mathbb{R}^{2}}{e^{i\eta_{1}y_{1} + i\eta_{2}y_{2} + itP_{m}(\eta_{1}+\eta_{2}, \eta_{1}-\eta_{2})}}\hat{g}(\eta_{1}+\eta_{2}, \eta_{1}-\eta_{2})d\eta_{1}d\eta_{2}\biggl|^{2}}dtdy \nonumber\\
&\lesssim \int_{\mathbb{R}^{2}}{\frac{|\hat{g}(\eta_{1}+\eta_{2}, \eta_{1}-\eta_{2})|^{2}}{|\nabla P_{m}(\eta_{1}+\eta_{2}, \eta_{1}-\eta_{2})|}d\eta_{1}d\eta_{2} } \nonumber\\
&\lesssim \int_{\mathbb{R}^{2}}{\frac{|\hat{g}(\xi_{1}, \xi_{2})|^{2}}{|\nabla P_{m}(\xi_{1}, \xi_{2})|}d\xi_{1}d\xi_{2}} .
\end{align}
For the same reason,
\begin{align}\label{Eq2.3.6}
&\int_{B(0,1)}{\int_{0}^{1}|\int_{\mathbb{R}^{2}}{e^{ix_{1}  \xi_{1} + ix_{2}  \xi_{2} + itP_{m}(\xi_{1},\xi_{2})}}P_{m}(\xi_{1},\xi_{2})\hat{g}(\xi_{1}, \xi_{2})d\xi_{1}d\xi_{2}|^{2}}dtdx \nonumber\\
&\lesssim \int_{\mathbb{R}^{2}}{\frac{|P_{m}(\xi_{1},\xi_{2})|^{2}|\hat{g}(\xi_{1}, \xi_{2})|^{2}}{|\nabla P_{m}| (\xi_{1}, \xi_{2})}d\xi_{1}d\xi_{2}}.
\end{align}
Inequality (\ref{Eq2.3.3}) follows from (\ref{Eq2.3.4}), (\ref{Eq2.3.5}) and (\ref{Eq2.3.6}).
\end{proof}

\textbf{Proof of Theorem \ref{theorem2.3}.}
(1) We decompose $f$ as
\[f=\sum_{k=0}^{\infty}{f_{k}},\]
where supp$\hat{f_{0}} \subset B(0,1)$,  supp$ \hat{f_{k}} \subset \{\xi: |\xi| \sim 2^{k}\}, k \ge 1$. Then we have
\begin{equation}\label{Eq2.3.7}
\biggl\|\mathop{sup}_{0<t<1} |e^{itP_{m}(D)}f|\biggl\|_{L^{2}(B(0,1))} \le \sum_{k=0}^{\infty}{\biggl\|\mathop{sup}_{0<t<1} |e^{itP_{m}(D)}f_{k}|\biggl\|_{L^{2}(B(0,1))}}.
\end{equation}

For $k \lesssim 1$, since for each $x \in B(0,1)$,
$$\biggl|e^{itP_{m}(D)}f_{k}(x)\biggl| \lesssim \|f_{k}\|_{L^{2}(\mathbb{R}^{2})},$$
it is obvious that
\begin{equation}
\biggl\|\mathop{sup}_{0<t<1} |e^{itP_{m}(D)}f_{k}|\biggl\|_{L^{2}(B(0,1))} \lesssim \|f\|_{H^{s}(\mathbb{R}^{2})}.
\end{equation}

For $k \gg 1$, we decompose each $f_{k}$ as
\[f_{k} = \sum_{j=1}^{3}{f_{k,j}},\]
where supp$ \widehat{f_{k,j}} \subset A_{k,j}, j=1,2,3$,
\[A_{k,1}=\{\xi: |\xi| \sim 2^{k}, |\xi_{2}| \gg |\xi_{1}|^{m-1}\},\]
\[A_{k,2}=\{\xi: |\xi| \sim 2^{k}, |\xi_{2}| \sim |\xi_{1}|^{m-1}\},\]
\[A_{k,3}=\{\xi: |\xi| \sim 2^{k}, |\xi_{2}| \ll |\xi_{1}|^{m-1}\},\]
then
\begin{equation}
\biggl\|\mathop{sup}_{0<t<1} |e^{itP_{m}(D)}f_{k}|\biggl\|_{L^{2}(B(0,1))} \le \sum_{j=1}^{3}{\biggl\|\mathop{sup}_{0<t<1} |e^{itP_{m}(D)}f_{k,j}|\biggl\|_{L^{2}(B(0,1))}}.
\end{equation}

By Lemma \ref{lemma3.1},
\begin{align}\label{Eq2.3.8}
&\biggl\|\mathop{sup}_{0<t<1} |e^{itP_{m}(D)}f_{k,1}|\biggl\|_{L^{2}(B(0,1))}  \nonumber\\
&\le \|f_{k,1}\|_{L^{2}(\mathbb{R}^{2})} + \biggl(\int{\frac{|\xi_{1}\xi_{2}+\frac{1}{m}\xi_{1}^{m}|^{2}}{|\xi_{2}+\xi_{1}^{m-1}|+|\xi_{1}|} |\widehat{f_{k,1}}(\xi_{1}, \xi_{2})|^{2} d\xi_{1}d\xi_{2}}\biggl)^{\frac{1}{4}} \nonumber\\
& \:\ \times \biggl(\int{\frac{1}{|\xi_{2}+\xi_{1}^{m-1}|+|\xi_{1}|} |\widehat{f_{k,1}}(\xi_{1}, \xi_{2})|^{2} d\xi_{1}d\xi_{2}}\biggl)^{\frac{1}{4}} \nonumber\\
&\lesssim min\{2^{\frac{k}{2}}, 2^{\frac{k}{2(m-1)}}\} \|f_{k,1}\|_{L^{2}(\mathbb{R}^{2})} \nonumber\\
&\le 2^{(-s+\frac{1}{2})k} \|f\|_{H^{s}(\mathbb{R}^{2})}.
\end{align}
Analogously,
\begin{align}\label{Eq2.3.9}
&\biggl\|\mathop{sup}_{0<t<1} |e^{itP_{m}(D)}f_{k,2}|\biggl\|_{L^{2}(B(0,1))}  \nonumber\\
&\le \|f_{k,2}\|_{L^{2}(\mathbb{R}^{2})} + \biggl(\int{\frac{|\xi_{1}\xi_{2}+\frac{1}{m}\xi_{1}^{m}|^{2}}{|\xi_{2}+\xi_{1}^{m-1}|+|\xi_{1}|} |\widehat{f_{k,2}}(\xi_{1}, \xi_{2})|^{2} d\xi_{1}d\xi_{2}}\biggl)^{\frac{1}{4}} \nonumber\\
& \:\ \times \biggl(\int{\frac{1}{|\xi_{2}+\xi_{1}^{m-1}|+|\xi_{1}|} |\widehat{f_{k,2}}(\xi_{1}, \xi_{2})|^{2} d\xi_{1}d\xi_{2}}\biggl)^{\frac{1}{4}} \nonumber\\
&\lesssim 2^{\frac{k}{2}} \|f_{k,2}\|_{L^{2}(\mathbb{R}^{2})} \nonumber\\
&\le 2^{(-s+\frac{1}{2})k} \|f\|_{H^{s}(\mathbb{R}^{2})}.
\end{align}

In order to deal with $f_{k,3}$, we further decompose $A_{k,3} = \bigcup_{l=1}^{k} A_{k,3}^{l}$, where
\begin{equation}\label{Eq2.3.10}
A_{k,3}^{l}=\{\xi: |\xi| \sim 2^{k}, 2^{l-1} \le |\xi_{1}|<2^{l}, |\xi_{2}| \ll |\xi_{1}|^{m-1}\},
\end{equation}
and
\[f_{k,3} = \sum_{l=1}^{k}{f_{k,3}^{l}},\]
such that supp$ \widehat{f_{k,3}^{l}} \subset A_{k,3}^{l}$, $1 \le l \le k$. Then for each $f_{k,3}^{l}$,
\begin{align}\label{Eq2.3.11}
&\biggl\|\mathop{sup}_{0<t<1} |e^{itP_{m}(D)}f_{k,3}^{l}|\biggl\|_{L^{2}(B(0,1))}  \nonumber\\
&\le \|f_{k,3}^{l}\|_{L^{2}(\mathbb{R}^{2})} + \biggl(\int{\frac{|\xi_{1}\xi_{2}+\frac{1}{m}\xi_{1}^{m}|^{2}}{|\xi_{2}+\xi_{1}^{m-1}|+|\xi_{1}|} |\widehat{f_{k,3}^{l}}(\xi_{1}, \xi_{2})|^{2} d\xi_{1}d\xi_{2}}\biggl)^{\frac{1}{4}} \nonumber\\
& \:\ \times \biggl(\int{\frac{1}{|\xi_{2}+\xi_{1}^{m-1}|+|\xi_{1}|} |\widehat{f_{k,3}^{l}}(\xi_{1}, \xi_{2})|^{2} d\xi_{1}d\xi_{2}}\biggl)^{\frac{1}{4}} \nonumber\\
&\lesssim 2^{\frac{l}{2}} \|f_{k,3}^{l}\|_{L^{2}(\mathbb{R}^{2})}.
\end{align}
Due to (\ref{Eq2.3.10}) and (\ref{Eq2.3.11}), we have
\begin{align}\label{Eq2.3.12}
\biggl\|\mathop{sup}_{0<t<1} |e^{itP_{m}(D)}f_{k,3}|\biggl\|_{L^{2}(B(0,1))} &\le \sum_{l=1}^{k}{\biggl\|\mathop{sup}_{0<t<1} |e^{itP_{m}(D)}f_{k,3}^{l}|\biggl\|_{L^{2}(B(0,1))}} \nonumber\\
&\lesssim  \sum_{l=1}^{k}{2^{\frac{l}{2}} \|f_{k,3}^{l}\|_{L^{2}(\mathbb{R}^{2})}} \nonumber\\
&\lesssim  k2^{\frac{k}{2}} \|f_{k,3}\|_{L^{2}(\mathbb{R}^{2})} \nonumber\\
&\le 2^{(-s+\frac{1}{2})k}k \|f\|_{H^{s}(\mathbb{R}^{2})}.
\end{align}
Inequalities (\ref{Eq2.3.8}), (\ref{Eq2.3.9}) and (\ref{Eq2.3.12}) imply when $k \gg 1$,
\begin{equation}\label{Eq2.3.13}
\biggl\|\mathop{sup}_{0<t<1} |e^{itP_{m}(D)}f_{k}|\biggl\|_{L^{2}(B(0,1))} \lesssim 2^{(-s+\frac{1}{2})k}k \|f\|_{H^{s}(\mathbb{R}^{2})},
\end{equation}
and then (\ref{Eq2.3.1}) follows.

(2) We can use the similar argument to give the proof of the positive result. Next we just show the counterexample for $1 < m <2$, $s < \frac{1}{2}$.

Define the subset of $\{\xi \in \mathbb{R}^{2}: |\xi| \sim R\}$ by
\[A_{R}=[R,R+1] \times [R, \frac{3R}{2}]\]
and define the function $f$ by
\[\hat{f}(\xi) = \chi_{A_{R}}(\xi).\]
It is obvious that
\begin{equation}\label{Eq2.3.14}
\|f\|_{L^{2}(\mathbb{R}^{2})}=R^{1/2}.
\end{equation}
By Taylor expansion, for each $\eta_{1} \in [0,1]$,
\[\frac{1}{m}|\eta_{1}+R|^{m} = \frac{1}{m}R^{m}+ R^{m-1} \eta_{1} + \frac{m-1}{2}|\theta \eta_{1}+R|^{m-2}\eta_{1}^{2},\hspace{0.5cm} \theta \in [0,1] \text{ depends on } \eta_{1}.\]
Hence by scaling and translating, we have
\begin{align}
|e^{itP_{m}(D)}f(x)| &= \biggl|\int_{R}^{\frac{3R}{2}}\int_{R}^{R+1}{e^{ix_{1}\xi_{1}+ix_{2}\xi_{2}+it(\xi_{1}\xi_{2}+\frac{1}{m}|\xi_{1}|^{m})}d\xi_{1}d\xi_{2}}\biggl| \nonumber\\
&=\frac{R}{2}\biggl|\int_{0}^{1}\int_{0}^{1}{e^{i(x_{1}+Rt)\eta_{1}+i\frac{R}{2}(x_{2}+Rt)\eta_{2}+i\frac{t}{2}(R\eta_{1}\eta_{2}+\frac{1}{m}|\eta_{1}+R|^{m})}d\eta_{1}d\eta_{2}}\biggl| \nonumber\\
&=\frac{R}{2}\biggl|\int_{0}^{1}\int_{0}^{1}{e^{i(x_{1}+Rt+R^{m-1}t)\eta_{1}+i\frac{R}{2}(x_{2}+Rt)\eta_{2}+i\frac{t}{2}(R\eta_{1}\eta_{2}+(m-1)|\theta \eta_{1}+R|^{m-2}\eta_{1}^{2})}d\eta_{1}d\eta_{2}}\biggl|.
\end{align}
Therefore, if $(x_{1}, x_{2}) \in [-1/1000,1/1000] \times [-1/2000,-1/1000]$, $t = -x_{2}/R + 1/R^{2}$ and $R$ is sufficiently large, then the abstract value of the phase function
\[|(x_{1}+Rt+R^{m-1}t)\eta_{1}+\frac{R}{2}(x_{2}+Rt)\eta_{2}+\frac{t}{2}(R\eta_{1}\eta_{2}+(m-1)|\theta \eta_{1}+R|^{m-2}\eta_{1}^{2}| \lesssim \frac{1}{1000}.\]
it follows that if $(x_{1}, x_{2}) \in [-1/1000,1/1000] \times [-1/2000,-1/1000]$, $t = -x_{2}/R + 1/R^{2}$ and $R$ is sufficiently large
\[|e^{itP_{m}(D)}f(x)| \gtrsim R.\]
Hence
\begin{equation}\label{Eq2.3.15}
 \big\|\mathop{sup}_{0<t<1} |e^{itP_{m}(D)}f|\big\|_{L^{1}(B(0,1))} \gtrsim R.
\end{equation}
Inequalities (\ref{Eq2.3.14}) and (\ref{Eq2.3.15}) imply (\ref{Eq2.3.2}).

\section{Proof of Theorem \ref{theorem2.4}}\label{Section 2}

\textbf{Proof of Theorem \ref{theorem2.4}.}
It is sufficient to show that for some $q \ge 1$ and $\forall \epsilon >0$,  $\forall x_{0} \in \mathbb{R}^{2}$, $s_{1} = s_{0} + \epsilon$,
\begin{equation}\label{Eq2.4}
\biggl\|\mathop{sup}_{0<t<1} \frac{|e^{itP(D)}(f)-f|}{t^{\delta/m}}\biggl\|_{L^{q}(B(x_{0},1))} \lesssim \|f\|_{H^{s_{1}+ \delta}(\mathbb{R}^{n})}.
\end{equation}
By translation, (\ref{Eq2.4}) can be reduced to
\begin{equation}\label{Eq22}
\biggl\|\mathop{sup}_{0<t<1} \frac{|e^{itP(D)}(f)-f|}{t^{\delta/m}}\biggl\|_{L^{q}(B(0,1))} \lesssim \|f\|_{H^{s_{1}+ \delta}(\mathbb{R}^{n})}.
\end{equation}
Concretely, if (\ref{Eq22}) holds for all $f \in H^{s_{1}+ \delta}(\mathbb{R}^{n})$, take $f_{0}$,
\[\hat{f_{0}}(\xi) = e^{ix_{0} \cdot \xi}\hat{f}(\xi)\]
and insert $f_{0}$ into (\ref{Eq22}). Then (\ref{Eq2.4}) follows from simple computation.

Next we show (\ref{Eq2.4}) implies (\ref{Eq2.4.3}). In fact, if (\ref{Eq2.4}) holds, then fix $\lambda >0$, for any $\epsilon >0$, choose $g \in C_{c}^{\infty}(\mathbb{R}^{n})$ such that
\begin{equation}
\|f-g\|_{H^{s_{1}+ \delta}(\mathbb{R}^{n})} \le  \frac{\lambda \epsilon^{1/q}}{2},
\end{equation}
it follows
\begin{align}
&\biggl|\biggl\{{ x \in B(x_{0},1): \mathop{sup}_{0<t<1} \frac{|e^{itP(D)}(f-g)-(f-g)|}{t^{\delta/m}}} > \frac{\lambda}{2}\biggl\}\biggl|  \nonumber\\
&\le \frac{2^{q}}{\lambda^{q}} \biggl\|\mathop{sup}_{0<t<1} \frac{|e^{itP(D)}(f-g)-(f-g)|}{t^{\delta/m}}\biggl\|_{L^{q}(B(x_{0},1))}^{q} \nonumber\\
&\lesssim \frac{2^{q}}{\lambda^{q}}\|f-g\|_{H^{s_{1}+ \delta}(\mathbb{R}^{n})}^{q} \nonumber\\
&\le \epsilon,
\end{align}
and
\begin{align}
\frac{|e^{itP(D)}(g)(x)-g(x)|}{t^{\delta/m}} &\le t^{1-\frac{\delta}{m}} \int_{\mathbb{R}^{n}}{|P(\xi)\hat{g}(\xi)|d\xi}\rightarrow 0,\hspace{0.5cm} \textmd{if } \hspace{0.2cm}t \rightarrow 0^{+}
\end{align}
uniformly for $x \in B(x_{0},1)$. Then we have
\begin{align}
\biggl|\biggl\{{ x \in B(x_{0},1): \mathop{lim sup}_{t \rightarrow 0^{+}} \frac{|e^{itP(D)}(f)(x)-(f)(x)|}{t^{\delta/m}}} > \lambda\biggl\}\biggl| \le \epsilon,
\end{align}
which  implies (\ref{Eq2.4.3}) for $f \in H^{s_{1} +\delta}(\mathbb{R}^{n})$ and $x \in B(x_{0}, 1)$. By the arbitrariness of $\epsilon$ and $x_0$, in fact we can get (\ref{Eq2.4.3}) for all $f \in H^{s +\delta}(\mathbb{R}^{n})$, $s > s_{0}$ and $x \in \mathbb{R}^{n}$. Next we will prove (\ref{Eq22}) for $q=min\{p,2\}$.

In order to prove (\ref{Eq22}), we decompose $f$ as
\[f=\sum_{k=0}^{\infty}{f_{k}},\]
where supp$ \hat{f_{0}} \subset B(0,1)$, supp$ \hat{f_{k}} \subset \{\xi: |\xi| \sim 2^{k}\}$, $k \ge 1$. It follows that
\begin{equation}\label{Eq2.9}
\biggl\|\mathop{sup}_{0<t<1} \frac{|e^{itP(D)}(f)-f|}{t^{\delta/m}}\biggl\|_{L^{q}(B(0,1))} \le \sum_{k=0}^{\infty}{\biggl\|\mathop{sup}_{0<t<1} \frac{|e^{itP(D)}(f_{k})-f_{k}|}{t^{\delta/m}}\biggl\|_{L^{q}(B(0,1))}}.
\end{equation}
By Taylor's formula, for each $k$,
\begin{equation}\label{Eq2.10}
\frac{|e^{itP(D)}(f_{k})-f_{k}|}{t^{\delta/m}} \le \sum_{j=1}^{\infty}{\frac{t^{j-\delta/m}}{j!} \biggl|\int_{\mathbb{R}^{n}}{e^{ix\cdot\xi}P(\xi)^{j}\hat{f_{k}}}(\xi)d\xi\biggl|  }.
\end{equation}

For $k \lesssim 1$, because (\ref{Eq2.10}) and $P(\xi)$ is continuous,
\begin{align}\label{Eq2.11}
\biggl\|\mathop{sup}_{0<t<1} \frac{|e^{itP(D)}(f_{k})-f_{k}|}{t^{\delta/m}}\biggl\|_{L^{q}(B(0,1))} &\le \sum_{j=1}^{\infty}{\frac{1}{j!} \biggl\|\int_{\mathbb{R}^{n}}{e^{ix\cdot\xi}P(\xi)^{j}\hat{f_{k}}}(\xi)d\xi\biggl\|_{L^{q}(B(0,1))}} \nonumber\\
&\le \sum_{j=1}^{\infty}{\frac{1}{j!} \biggl\|\int_{\mathbb{R}^{n}}{e^{ix\cdot\xi}P(\xi)^{j}\hat{f_{k}}}(\xi)d\xi\biggl\|_{L^{2}(B(0,1))}} \nonumber\\
&\le \sum_{j=1}^{\infty}{\frac{1}{j!} \|P(\xi)^{j}\hat{f_{k}}}(\xi)\|_{L^{2}(\mathbb{R}^{n})} \nonumber\\
&\lesssim \|f\|_{H^{s_{1}+ \delta}(\mathbb{R}^{n})}.
\end{align}

For $k \gg 1$,
\begin{align}
\biggl\|\mathop{sup}_{0<t<1} \frac{|e^{itP(D)}(f_{k})-f_{k}|}{t^{\delta/m}}\biggl\|_{L^{q}(B(0,1))} &\le \biggl\|\mathop{sup}_{0<t<2^{-mk}} \frac{|e^{itP(D)}(f_{k})-f_{k}|}{t^{\delta/m}}\biggl\|_{L^{q}(B(0,1))} \nonumber\\
& \:\ + \biggl\|\mathop{sup}_{2^{-mk} \le t < 1} \frac{|e^{itP(D)}(f_{k})-f_{k}|}{t^{\delta/m}}\biggl\|_{L^{q}(B(0,1))}.
\end{align}
Inequalities (\ref{Eq2.10}) and (\ref{Eq2.4.1}) imply
\begin{align}\label{Eq2.13}
 \biggl\|\mathop{sup}_{0<t<2^{-mk}} \frac{|e^{itP(D)}(f_{k})-f_{k}|}{t^{\delta/m}}\biggl\|_{L^{q}(B(0,1))} &\le \sum_{j=1}^{\infty}{\frac{2^{-mkj+\delta k}}{j!} \biggl\|\int_{\mathbb{R}^{n}}{e^{ix\cdot\xi}P(\xi)^{j}\hat{f_{k}}}(\xi)d\xi\biggl\|_{L^{q}(B(0,1))}} \nonumber\\
&\le \sum_{j=1}^{\infty}{\frac{2^{-mkj+\delta k}}{j!} \biggl\|\int_{\mathbb{R}^{n}}{e^{ix\cdot\xi}P(\xi)^{j}\hat{f_{k}}}(\xi)d\xi \biggl\|_{L^{2}(B(0,1))}} \nonumber\\
&\le \sum_{j=1}^{\infty}{\frac{2^{-mkj+\delta k}}{j!} \|P(\xi)^{j}\hat{f_{k}}}(\xi)\|_{L^{2}(\mathbb{R}^{2})} \nonumber\\
&\le \sum_{j=1}^{\infty}{\frac{2^{-mkj+\delta k}2^{mkj}}{j!} \|\hat{f_{k}}}(\xi)\|_{L^{2}(\mathbb{R}^{2})} \nonumber\\
&\lesssim 2^{-s_{1}k}\|f\|_{H^{s_{1}+ \delta}(\mathbb{R}^{n})}.
\end{align}
From (\ref{Eq2.4.2}) we have,
\begin{equation}
\biggl\|\mathop{sup}_{2^{-mk} \le t<1} |e^{itP(D)}f_{k}|\biggl\|_{L^{p}(B(0,1))} \lesssim 2^{(s_{0}+\frac{\epsilon}{2})k}\|f_{k}\|_{L^{2}(\mathbb{R}^{n})},
\end{equation}
hence,
\begin{align}\label{Eq2.15}
 \biggl\|\mathop{sup}_{2^{-mk} \le t < 1} \frac{|e^{itP(D)}(f_{k})-f_{k}|}{t^{\delta/m}}\biggl\|_{L^{q}(B(0,1))} &\le 2^{\delta k} \biggl\|\mathop{sup}_{2^{-mk} \le t < 1} |e^{itP(D)}(f_{k})-f_{k}|\biggl\|_{L^{q}(B(0,1))} \nonumber\\
&\le 2^{\delta k} \biggl\{\biggl\|\mathop{sup}_{2^{-mk} \le t < 1} |e^{itP(D)}(f_{k})|\biggl\|_{L^{q}(B(0,1))} + \|f_{k}|\|_{L^{q}(B(0,1))} \biggl\}\nonumber\\
&\lesssim 2^{\delta k} \biggl\{\biggl\|\mathop{sup}_{2^{-mk} \le t < 1} |e^{itP(D)}(f_{k})|\biggl\|_{L^{p}(B(0,1))} + \|f_{k}|\|_{L^{2}(B(0,1))} \biggl\} \nonumber\\
&\lesssim 2^{\delta k}2^{(s_{0}+\frac{\epsilon}{2})k}\|f\|_{L^{2}(\mathbb{R}^{n})} \nonumber\\
&\lesssim 2^{-\frac{\epsilon k}{2}}\|f\|_{H^{s_{1}+ \delta}(\mathbb{R}^{n})}.
\end{align}
Inequalities (\ref{Eq2.13}) and (\ref{Eq2.15}) yield for $k \gg 1$,
\begin{align}\label{Eq2.16}
\biggl\|\mathop{sup}_{0<t<1} \frac{|e^{itP(D)}(f_{k})-f_{k}|}{t^{\delta/m}}\biggl\|_{L^{q}(B(0,1))} &\lesssim 2^{-\frac{\epsilon k}{2}}\|f\|_{H^{s_{1}+ \delta}(\mathbb{R}^{n})}.
\end{align}

It is clear that (\ref{Eq22}) follows from (\ref{Eq2.9}), (\ref{Eq2.11}) and (\ref{Eq2.16}).

\end{document}